\newtheorem{thm}{Theorem}[section]
\newtheorem{cor}[thm]{Corollary}
\newtheorem{defi}[thm]{Definition}
\begin{document}

\title[Convergence of simultaneous Fourier-Pad\'e approximation]
{Convergence of row sequences of simultaneous Fourier-Pad\'e approximation}

\author[J. Cacoq]{J. Cacoq}
\address{Dpto. de Matem\'aticas\\
Escuela Polit\'ecnica Superior \\
Universidad Carlos III de Madrid \\
Universidad 30, 28911 Legan\'es, Spain} \email{jcacoq@math.uc3m.es}
\thanks{The work of both authors was
supported by MINCINN under grant MTM2009-12740-C03-01}

\author[G. L\'opez ]{G. L\'opez Lagomasino}
\address{Dpto. de Matem\'aticas\\
Escuela Polit\'ecnica Superior \\
Universidad Carlos III de Madrid \\
Universidad 30, 28911 Legan\'es, Spain} \email{lago@math.uc3m.es}

\keywords{Montessus de Ballore Theorem, simultaneous approximation,
Fourier-Pad\'e approximation}

\subjclass[2010]{Primary 41A21, 41A28; Secondary 41A25, 41A27}

\begin{abstract} We consider row sequences of simultaneous rational approximations constructed in terms of Fourier expansions and prove a Montessus de Ballore type theorem.

\end{abstract}
\date{\today}
\maketitle


\section{Introduction}
Let $\mathbb{T} = \{z:|z|=1\}$ denote the unit circle and $\mathbb{D} = \{z:|z|<1\}$ the open unit disk. By $\sigma$ we denote a finite positive Borel measure whose support is contained in $\mathbb{T}$ and  $\sigma' > 0$ a.e. on $\mathbb{T}$. Let $\{\varphi_n\}$ be the corresponding sequence of orthonormal polynomials with positive leading coefficients
\[ \frac{1}{2\pi}\int \varphi_j(z) \overline{\varphi_k(z)} d\sigma(z) = \delta_{j,k},\qquad j,k \in {\mathbb{Z}}_+,
\]
where as usual $\delta_{j,k} =0, j\neq k$ and $\delta_{k,k} =1$. By ${\mathcal{H}}(\overline{\mathbb{D}})$ we denote the space of functions which are analytic on some neighborhood of $\overline{\mathbb{D}}$.

\begin{defi}\label{defsimultaneosFP}
Let ${\bf f} = (f_1,\ldots,f_d)$ where $f_k \in {\mathcal{H}}(\overline{\mathbb{D}}),k=1,\ldots,d$. Fix a multi-index ${\bf m} =
({m_1},\ldots,m_d) \in {\mathbb{Z}}_+^d \setminus \{\bf 0\}$ where
${\bf 0}$ denotes the zero vector in ${\mathbb{Z}}_+^d$. Set $|{\bf
m}| = {m_1} +\cdots + m_d$. Then, for each $n \geq \max
\{{m_1},\ldots,m_d\}$, there exist polynomials $Q_{n,{\bf m}}, P_{n,{\bf m},j},
j=1,\ldots,d,$ such that
\begin{itemize}
\item[a.1)] $\deg P_{n,{\bf m},j} \leq n - m_j, j=1,\ldots,d,\quad \deg Q_{n,{\bf m}}
\leq |\mathbf{m}|,\quad Q_{n,{\bf m}} \not\equiv 0,$
\item[a.2)] $[Q_{n,{\bf m}} f_j - P_{n,{\bf m},j} ](z) = A_{n,n+1}^{(j)} \varphi_{n+1}(z) + A_{n,n+2}^{(j)} \varphi_{n+2}(z) + \cdots .$
\end{itemize}
We call the vector rational function ${\bf R}_{n,{\bf m}} = (P_{n,{\bf m},1} /Q_{n,{\bf m}}
,\ldots,P_{n,{\bf m},d}/Q_{n,{\bf m}})$  an $(n,{\bf m})$ simultaneous Fourier-Pad\'e
approximation of ${\bf f}$.
\end{defi}

Obviously, the numbers $A^{(j)}_{n,k}$ also depend on  ${\bf m}$ but to simplify the notation we will not indicate it.

It is easy to see that for any pair $(n,{\bf m})$ there is  at least one $R_{n,{\bf m}}$ but,
in general, it is not uniquely determined. In the sequel, we assume that given $(n,{\bf m})$, one solution is taken. We will normalize the common denominator  in terms of its zeros $z_{n,k}$ as follows
\begin{equation} \label{eq:norm}  Q_{n,{\bf m}}(z) = \prod_{|z_{n,k}| \leq 1} (z - z_{n,k})\prod_{|z_{n,k}| > 1}(1 - \frac{z}{z_{n,k}}).
\end{equation}

Obviously, $Q_{n,{\bf m}} f_j - P_{n,{\bf m},j} \in {\mathcal{H}}(\overline{\mathbb{D}})$. Due to the asymptotic properties satisfied by $\{\varphi_n\}$ it is easy to verify that the Fourier expansion of this function converges uniformly on compact subsets of a neighborhood of $\overline{\mathbb{D}}$. (This will be justified later.)

The object of this paper is to prove, under appropriate assumptions on $\bf f$, that
\[ \lim_{n \to \infty} {\bf R}_{n,{\bf m}} = {\bf f}
\]
uniformly on compact subsets of the largest disk centered at $z=0$ containing at most $|{\bf m}|$ poles (in a sense to be described later). Moreover, we will see that under those assumptions the zeros of the common denominator  of the approximating rational functions point out the location and order of the poles of ${\bf f}$ in that disk.

For the case of Taylor expansions of a scalar function $(d=1)$, the corresponding result is called the Montessus de Ballore theorem, see \cite{Mon}. Also in the scalar case, S.P. Suetin gives in \cite{Sue} an extension of Montessus' result to Fourier expansions with respect to an orthonormal system of polynomials with respect to a measure supported on the real line. Another inspiring work  has been a version of Montessus' theorem given by P.R. Graves-Morris and E.B. Saff in \cite{GS1} within the context of vector functions and Taylor expansions (see also \cite{GS2} and \cite{GS3}). In \cite{GS1} the following concept is used.

\begin{defi}\label{defindependenciapolar1}
A vector ${\bf f} = (f_1,\ldots,f_d)$ of functions meromorphic in
some domain $D$ is said to be polewise independent with respect to
the multi-index ${\bf m} = ({m_1},\ldots,m_d) \in {\mathbb{Z}}_+^d
\setminus \{\bf 0\}$ in $D$ if there do not exist polynomials
$p_1,\ldots,p_d$, at least one of which is non-null, satisfying
\begin{itemize}
\item[b.1)] $p_j \equiv 0$ if $m_j =0,$
\item[b.2)] $\deg p_j \leq m_j -1, j=1,\ldots,d,$ if $m_j \geq 1,$
\item[b.3)] $\sum_{j=0}^d p_j f_j \in {\mathcal{H}}(D)$,
\end{itemize}
where ${\mathcal{H}}(D)$ denotes the space of analytic functions in
$D$.
\end{defi}
When $d=1$ polewise
independence merely expresses  that the function has at least $m_1 = {\bf m}$ poles in
$D$. For vector functions we adopt the following definition of a pole and its order.

\begin{defi}\label{defpolo}
Let ${\bf f} = (f_1,\ldots,f_d)$ be a system of analytic functions and
$\mathbf{D}=\left(D_1,\dots,D_d\right)$ a system of domains such
that, for each $k=1,\dots,d,$ $f_k$ is meromorphic in $D_k$. We say
that $\zeta$  is a pole of ${\bf f}$ in $\mathbf{D}$ of order
$\tau$ if there exists an index $k\in\{1,\dots,d\}$  such that $\zeta \in
D_k$ and it is a pole of $f_k$ of order $\tau$, and for the rest of the
indices $j\not = k$ either $\zeta$ is a pole of $f_j$ of order less than
or equal to $\tau$ or $\zeta \not\in D_j$.
\end{defi}


Polewise independence of $\mathbf{f}$ with respect to $\mathbf{m}$
in $D$  implies that $\mathbf{f}$ has at least $|\mathbf{m}| $ poles
in $\mathbf{D}={(D,\dots,D)}$ counting multiplicities, see  \cite[Lemma 1]{GS1}.
When $\mathbf{D}={(D,\dots,D)}$ we say that  $\zeta$  is a pole of ${\bf f}$ in $D$.

Given ${\bf f} \in {\mathcal{H}}(\overline{\mathbb{D}})$ (that is, each component of ${\bf f}$ is analytic in a neighborhood of $\overline{\mathbb{D}}$) let $D_{|{\bf m}|}({\bf f})$ denote the largest disk centered at the origin inside of which ${\bf f}$ has at most $|{\bf m}|$ poles and $R_{|{\bf m}|}({\bf f})$ denotes its radius.

Let
$Q_{|\bf m|}(\mathbf{f})$ be the polynomial whose zeros are
the poles of ${\bf f}$ in $D_{|{\bf m}|}({\bf f})$ counting multiplicities normalized as in \eqref{eq:norm}. This set of poles is denoted by
${\mathcal{P}}_{|{\bf m}|}({\bf f})$. We prove the following analogue of the Graves-Morris/Saff theorem contained in \cite{GS1} (see also \cite{CCG}).

\begin{thm} \label{corfund}  Assume that
$ {\bf f} \in {\mathcal{H}}(\overline{\mathbb{D}})$ and $\sigma' >0$ a.e. on $\mathbb{T}$. Fix a multi-index ${\bf m}\in {\mathbb{Z}}_+^d
\setminus \{\bf 0\}$ and suppose that ${\bf f}$ is polewise
independent with respect to ${\bf m}$ in $D_{|{\bf m}|}({\bf f})$, Then, ${\bf R}_{n,\bf m}$ is uniquely determined for all sufficiently large $n$. For any compact subset $K$ of $D_{|{\bf m}|}({\bf f})
\setminus {\mathcal{P}}_{|{\bf m}|}({\bf f})$
\begin{equation}\label{inequality3}
 \limsup_{n \to \infty}
\|f_i - R_{n,{\bf m},i}\|_{K}^{1/n} \leq \frac{\| z\|_ {K}}{R_{|{\bf
m}|}({\bf f})}, \quad i=1,\ldots,d,
\end{equation}
where $\|z\|_K$ is replaced by $1$ when $K \subset \overline{\mathbb{D}}$ . Additionally,
\begin{equation}\label{inequality4}
\limsup_{n \to \infty} \| {Q}_{|\mathbf{m}|}(\mathbf{f}) -
Q_{n,{\bf m}}\|^{1/n} \leq \frac{\max\{|\zeta|: \zeta \in
{\mathcal{P}}_{|{\bf m}|}({\bf f})\}}{R_{|{\bf m}|}({\bf f})}.
\end{equation}
\end{thm}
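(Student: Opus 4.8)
The strategy is the classical one for Montessus-type theorems, adapted to the Fourier setting following Suetin and the vectorial techniques of Graves–Morris/Saff. The first and most fundamental step is to obtain sharp estimates on the Fourier coefficients $A_{n,k}^{(j)}$ appearing in a.2). Writing $g_j = Q_{n,{\bf m}}f_j - P_{n,{\bf m},j}\in{\mathcal H}(\overline{\mathbb D})$, the coefficient $A_{n,k}^{(j)}$ is, up to the orthonormality normalization, a contour integral of $g_j\overline{\varphi_k}$ over $\mathbb T$; using the Szegő asymptotics for $\{\varphi_k\}$ (valid since $\sigma'>0$ a.e.) one deflects the contour outward and gets $|A_{n,k}^{(j)}|^{1/k}\to 1/\rho$ where $\rho$ is the radius of analyticity of $g_j$. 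Since $f_j$ is analytic in a neighborhood of $\overline{\mathbb D}$ and $Q_{n,{\bf m}}$ has uniformly bounded degree $|{\bf m}|$ and bounded coefficients (by the normalization \eqref{eq:norm}), $g_j$ has a guaranteed region of analyticity; the real content is that whenever $Q_{n,{\bf m}}$ is asymptotically close to a polynomial killing the first $|{\bf m}|$ poles of ${\bf f}$, $g_j$ extends analytically out to $R_{|{\bf m}|}({\bf f})$.

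The second step is to linearize the interpolation conditions. Condition a.2) states that $\sum_{k=0}^{|{\bf m}|} q_{n,k}\langle z^k f_j,\varphi_\ell\rangle = 0$ for $0\le\ell\le n$ (roughly), where $q_{n,k}$ are the coefficients of $Q_{n,{\bf m}}$; this is a homogeneous linear system whose coefficient matrix is built from Fourier coefficients of $z^k f_j$. One normalizes the vector of coefficients $(q_{n,0},\dots,q_{n,|{\bf m}|})$ to have unit norm and passes to a subsequence along which it converges to some limit $(\widehat q_0,\dots,\widehat q_{|{\bf m}|})$, defining a limit polynomial $\widehat Q$ with $\deg\widehat Q\le|{\bf m}|$ and $\widehat Q\not\equiv 0$. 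Passing to the limit in the linearized conditions — using the Fourier-coefficient decay from Step 1 and a careful bookkeeping of which conditions survive — one shows $\widehat Q f_j$ is analytic in $D_{|{\bf m}|}({\bf f})$ for each $j$, i.e. $\widehat Q$ is a polynomial of degree $\le|{\bf m}|$ whose multiplication with each $f_j$ removes all poles in that disk. Polewise independence of ${\bf f}$ with respect to ${\bf m}$, via \cite[Lemma 1]{GS1}, forces $\widehat Q$ to be a scalar multiple of $Q_{|{\bf m}|}({\bf f})$; in particular it is nonzero and the limit is independent of the subsequence, so $Q_{n,{\bf m}}\to Q_{|{\bf m}|}({\bf f})$ and (for large $n$) $Q_{n,{\bf m}}$ has exactly $|{\bf m}|$ zeros converging to ${\mathcal P}_{|{\bf m}|}({\bf f})$, which also yields uniqueness of ${\bf R}_{n,{\bf m}}$.

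The third step upgrades convergence to the geometric rate. Having identified the limit of $Q_{n,{\bf m}}$, one re-examines Step 1 with $g_j = Q_{n,{\bf m}}f_j - P_{n,{\bf m},j}$: now $Q_{n,{\bf m}}f_j$ is analytic in $D_{|{\bf m}|}({\bf f})$ up to an error coming from $\|Q_{n,{\bf m}} - Q_{|{\bf m}|}({\bf f})\|$, and the Fourier coefficient estimates give $\limsup_n |A_{n,k}^{(j)}|^{1/n}$ controlled by $1/R_{|{\bf m}|}({\bf f})$ together with the rate of Step 2. Summing the Fourier tail in a.2) against the Szegő-type upper bounds for $\varphi_k$ on a compact $K$ produces \eqref{inequality3}, after dividing by $Q_{n,{\bf m}}$ which is bounded below on $K\subset D_{|{\bf m}|}({\bf f})\setminus{\mathcal P}_{|{\bf m}|}({\bf f})$. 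To get the sharp rate \eqref{inequality4} for the denominators, one feeds \eqref{inequality3} back into the linearized system: the residual of the interpolation conditions, read as a statement about $\widehat Q - Q_{n,{\bf m}}$, is governed by the principal parts of ${\bf f}$ at its poles, and a Cramer's-rule / bordered-determinant argument (as in Suetin and in \cite{GS1}) extracts exactly the factor $\max\{|\zeta|:\zeta\in{\mathcal P}_{|{\bf m}|}({\bf f})\}/R_{|{\bf m}|}({\bf f})$.

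The main obstacle is Step 2: passing to the limit in the linearized orthogonality/interpolation relations while keeping precise track of how many conditions genuinely constrain $\widehat Q$, and showing the surviving relations force $\widehat Q f_j\in{\mathcal H}(D_{|{\bf m}|}({\bf f}))$ rather than merely in a smaller disk. This is where the interplay between the degree bounds $\deg P_{n,{\bf m},j}\le n-m_j$ (which shift the ranges of $\ell$ differently for different $j$) and the Fourier-coefficient asymptotics must be handled carefully; the vectorial polewise-independence hypothesis is exactly what is needed to conclude once the limit relations are in hand.
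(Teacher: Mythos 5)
Your overall architecture (estimate the coefficients $A^{(i)}_{n,k}$, identify the limit of the denominators, feed the limit back to get geometric rates) is in the right spirit, but the plan leaves the genuinely hard step unresolved and in one place proposes a reduction that does not work as stated. First, Steps 1 and 2 are circular as written: the sharp decay of $A^{(i)}_{n,k}$ at rate $1/R_{|{\bf m}|}({\bf f})$ is only available \emph{after} one knows that $Q_{n,{\bf m}}$ is close to $Q_{|{\bf m}|}({\bf f})$, yet Step 2 invokes ``the Fourier-coefficient decay from Step 1.'' The paper breaks this circle with the device your sketch is missing (borrowed from Suetin): the $|{\bf m}|$ exact vanishing conditions $A^{(i)}_{n,k}=0$, $k=n-m_i+1,\dots,n$, are rewritten via the residue theorem as a square linear system in the quantities $\alpha_n(j,\ell)$ built from $\lim_{z\to z_j}(Q_{n,{\bf m}}\psi_n)^{(\tau_j-1-\ell)}(z)$; by the ratio asymptotics \eqref{Rak2}--\eqref{Rak3} its coefficient matrix converges to a determinant $\Delta$ which is nonzero \emph{precisely because of polewise independence}, and Cramer's rule then yields the bound \eqref{eq:As} on all $A^{(i)}_{n,k}$, $k\geq n+1$, with the correct rate and with no prior knowledge of the limit of $Q_{n,{\bf m}}$. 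Second, your Step 2 cannot be executed component by component: the conditions supplied by the $i$-th component sit at indices $\ell=n-m_i+1,\dots,n$, which tend to infinity with $n$, so there is no fixed family of linear relations in which to pass to the limit; after the necessary renormalization by $\psi_n$ only $m_i$ relations on the principal parts survive, far fewer than the up to $|{\bf m}|$ relations needed to conclude that $\widehat Q f_i$ is analytic in $D_{|{\bf m}|}({\bf f})$. That conclusion is reachable only through the \emph{joint} $|{\bf m}|\times|{\bf m}|$ system over all $i$, i.e.\ through the same determinant $\Delta$; this is exactly where the hypothesis of polewise independence enters, and it is the step your outline names as ``the main obstacle'' without resolving it.

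Concerning the endgame: the appeal to ``a Cramer's-rule / bordered-determinant argument'' for \eqref{inequality4} does not explain where the numerator $\max\{|\zeta|:\zeta\in{\mathcal{P}}_{|{\bf m}|}({\bf f})\}$ comes from. In the paper it arises from the pointwise derivative estimates \eqref{veloc}, namely $\limsup_n|Q_{n,{\bf m}}^{(j)}(z_k)|^{1/n}\leq|z_k|/R_{|{\bf m}|}({\bf f})$ for $j=0,\dots,\tau_k-1$, proved by restricting \eqref{e4} to small circles about each pole, differentiating with Cauchy's formula, and running an induction in $j$ via the Leibniz rule (using that the leading coefficient of the principal part is nonzero); one then expands $Q_{n,{\bf m}}-C_nQ_{|{\bf m}|}({\bf f})$ in a Hermite interpolation basis at the poles and controls $C_n$ by evaluating at $z=0$. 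None of this is in your outline and it is not routine. Note also that in the paper the uniqueness of ${\bf R}_{n,{\bf m}}$ and the equality $\deg Q_{n,{\bf m}}=|{\bf m}|$ are obtained from Hausdorff-content convergence together with Gonchar's lemma (each pole attracts as many zeros of $Q_{n,{\bf m}}$ as its order), rather than from the subsequence/compactness argument you propose.
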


In the space of polynomials of degree $\leq |{\bf m}|$ all norms are equivalent so in \eqref{inequality4} any norm can be taken.

The paper is organized as follows. Section 2 is dedicated to the study of a new construction which we call incomplete Fourier-Pad\'e approximation and prove in some weak sense a Montessus type theorem for such approximants. In the last section we apply the results obtained in Section 2 to the case of simultaneous Fourier-Pad\'e approximation and prove  Theorem \ref{corfund}.


\section{Incomplete Fourier-Pad\'e approximants}\label{incompletos}

Let us introduce the type of convergence which is relevant in this section.
Let $B$ be a subset of the complex
plane $\mathbb{C}$. By $\mathcal{U}(B)$ we denote the class of all
coverings of $B$ by at most a numerable set of disks. Set
$$
h(B)=\inf\left\{\sum_{i=1}^\infty
|U_i|\,:\,\{U_i\}\in\mathcal{U}(B)\right\},
$$
where $|U_i|$ stands for the radius of the disk $U_i$. The quantity
$h(B)$ is called the $1$-dimensional Hausdorff content of the
set $B$. This set function is not a measure but it is  semi-additive
and monotonic, properties which will be used later. Clearly, if $B$
is a disk then $h(B)=|B|$.
\begin{defi}\label{defcontenido}
Let $\{g_n\}_{n\in\mathbb{N}}$ be a sequence of functions
defined on a domain $D\subset\mathbb{C}$ and $g$ another
function defined on $D$. We say that
$\{g_n\}_{n\in\mathbb{N}}$ converges in $h$-content to
the function $g$ on compact subsets of $D$ if for every compact
subset $K$ of $D$ and for each $\varepsilon
>0$, we have
$$
\lim_{n\to\infty} h\{z\in K :
|g_n(z)-g(z)|>\varepsilon\}=0.
$$
Such a convergence will be denoted by $h$-$\lim_{n\to\infty}
g_n = g$ in $D$.
\end{defi}

Now, we introduce a new type of approximants which will be useful for  achieving our main goal.

\begin{defi}\label{defincompletosFP}
Let $f \in {\mathcal{H}}(\overline{\mathbb{D}})$.   Fix
${m^*} \leq m$. Let $n \geq m$. We say that the rational function
$R_{n,m} = P_{n,m}/Q_{n,m}$ is an incomplete Fourier-Pad\'e approximation of type $(n,m,{m^*})
$ corresponding to $f$ if   $P_{n,m} $ and $Q_{n,m} $ are polynomials that verify
\begin{itemize}
\item[c.1)]
$\deg P_{n,m} \le n-{m^*},\quad \deg Q_{n,m} \le m,\quad Q_{n,m} \not\equiv 0,$
\item[c.2)] $[Q_{n,m} f-P_{n,m}](z)=
a_{n,n+1} \varphi_{n+1}(z) + a_{n,n+2} \varphi_{n+2} + \cdots .$
\end{itemize}
\end{defi}

The polynomials $P_{n,m},Q_{n,m}$ depend on $m^*$ and the numbers $a_{n,k}$ depend on $(m,m^*)$ but we do not indicate it to reduce the notation.

From Definitions
\ref{defsimultaneosFP} and \ref{defincompletosFP} it follows that
$R_{n,{\bf m},k}, k=1,\ldots,d,$  is an incomplete Pad\'e
approximation of type $(n,|\mathbf{m}|,m_k)$ with respect to $f_k$.

Given $n \geq m \geq {m^*},$ $R_{n,m}$ is not unique so  we choose one candidate. The denominator $Q_{n,m}$ will be normalized as in \eqref{eq:norm}.

Our purpose in this section is to prove that  $h-\lim_{n\to \infty}R_{n,m} = f$ in  the largest disk $D_{m^*}(f)$ inside of which $f$ has at most $m^*$ poles.

Take an arbitrary $\varepsilon > 0$ and define the open set
$J_\varepsilon$ as follows. For $n \geq m$, let $J_{n,\varepsilon}$
denote the $\varepsilon/6mn^2$-neighborhood of the set
${\mathcal{P}}_{n,m} = \{\zeta_{n,1}, \ldots, \zeta_{n,m_n}\}$ of
finite zeros of $Q_{n,m}$ and let
$J_{m-1,\varepsilon}$ denote the $\varepsilon/6m$-neighborhood of
the set of poles of $f$ in $D_{m}(f)$. Set $J_\varepsilon = \cup_{n \ge m
-1} J_{n,\varepsilon}$. From monotonicity and subadditivity it is easy to check that $h(J_{\varepsilon}) <
\varepsilon$ and $J_{\varepsilon_1} \subset J_{\varepsilon_2}$ for $
\varepsilon_1 < \varepsilon_2$. For any set $B \subset {\mathbb{C}}$
we put ${B}(\varepsilon) := {B} \setminus J_{\varepsilon} $.

Obviously, if $\{g_n\}_{n \in \mathbb{N}}$ converges
uniformly to $g$ on ${K}(\varepsilon)$ for every compact ${K}
\subset D$ and  $\varepsilon >0$, then
$h$-$\lim_{n\to\infty} g_n = g$ in $D$.

Due to the normalization \eqref{eq:norm}, for any compact set $K$ of
$\mathbb{C}$ and for every $\varepsilon >0$, there exist positive constants
$C_1, C_2$, independent of $n$, such that
\begin{equation} \label{desig1} \|Q_{n,m}\|_{K} < C_1,
\qquad \min_{z \in {K}(\varepsilon)}|Q_{n,m}(z)| > C_2 n^{-2m},
\end{equation}
where the second inequality is meaningful when ${K}(\varepsilon)$ is
a non-empty set.

In the sequel, $C_k$ will be used to denote positive constants, generally
different, that are independent of $n$ but may depend on all
the other parameters involved in each formula where they appear.

Given $\sigma$ and $\varphi_n$ denote
\[ \psi_n(z) = \frac{1}{2\pi} \int \frac{\overline{\varphi_n(\zeta)}d\sigma(\zeta)}{z - \zeta}
\]
If $\sigma' > 0$ a.e. on $\mathbb{T}$, E.A. Rakhmanov's theorem (see, for example, \cite{Rak}) implies that
\begin{equation} \label{Rak1}
\lim_{n \to \infty} \frac{\varphi_{n+k}(z)}{\varphi_n(z)} = z^k,\qquad k \in \mathbb{Z},
\end{equation}
\begin{equation} \label{Rak2}
\lim_{n \to \infty} \frac{\psi_{n+k}(z)}{\psi_n(z)} = \frac{1}{z^k},\qquad k \in \mathbb{Z},
\end{equation}
uniformly on each compact subset of $\mathbb{C} \setminus \overline{\mathbb{D}}$. In turn, these relations easily imply that
\begin{equation} \label{Rak3}
\lim_{n \to \infty} |\varphi_n(z)|^{1/n} = |z|,
\end{equation}
\begin{equation} \label{Rak4}
\lim_{n \to \infty} | {\psi_n(z)}|^{1/n} = |z|^{-1},
\end{equation}
uniformly on each compact subset of $\mathbb{C} \setminus \overline{\mathbb{D}}$. Formulas \eqref{Rak1}-\eqref{Rak4} are basic in our proofs.

Let $g\in \mathcal{H}(\overline{\mathbb{D}})$. Take $r >1$ so that $g\in \mathcal{H}(\{z:|z| \leq r\})$. Set $T_{r} = \{z:|z| = r\}$. Using Cauchy's integral formula and Fubini's theorem, we obtain
\[ \langle g, \varphi_k \rangle = \frac{1}{2\pi} \int g(\zeta) \overline{\varphi_k(\zeta)} d\sigma(\zeta) = \frac{1}{2\pi} \int \frac{1}{2\pi i} \int_{T_r } \frac{g(z)dz}{z-\zeta} \overline{\varphi_k(\zeta)} d\sigma(\zeta) =
\]
\begin{equation} \label{coefF} \frac{1}{2\pi i} \int_{T_r} g(z) \psi_k(z) d z.
\end{equation}
Using \eqref{Rak2}-\eqref{coefF}, it readily follows that
\[ \sum_{k=0}^\infty \langle g, \varphi_k \rangle \varphi_k(z)
\]
converges uniformly on each compact subset of $D_r = \{z: |z| < r\}$ and the limit must be an analytic function in $D_{r}$. On the other hand, this is the Fourier series of $g$ with respect to the orthonormal system $\{\varphi_n\}$; consequently, its norm-2 limit on $\mathbb{T}$ is $g$. By the principle of analytic continuation, the series converges uniformly to $g$ on compact subsets of $D_{r}$. This justifies that the right hand sides of a.2) and c.2) converge uniformly to the corresponding left hand on each compact subset of a neighborhood of $\overline{\mathbb{D}}$.

\begin{thm}\label{contenido}
Let $f \in {\mathcal{H}}(\overline{\mathbb{D}})$ and $\sigma' > 0$ a.e. on $\mathbb{T}$. Fix $m$ and ${m^*}$ nonnegative integers, $m \geq
{m^*}$. For each $n \geq m$, let $R_{n,m}$ be an incomplete Pad\'e
approximant of type $(n,m,{m^*})$ for $f$. Then, for each $\varepsilon > 0$ and every compact subset $K$ of  $D_{m^*}(f)$
\begin{equation} \label{fund4}
\limsup_{n \to \infty}\|f - R_{n,m}\|_{K(\varepsilon)}^{1/n} \leq \frac{\|z\|_K}{R_{m^*}(f)},
\end{equation}
where $\|z\|_K$ should be replaced by $1$ when $K \subset \overline{\mathbb{D}}$. In particular,
$$
h\mbox{-}\lim_{n\to\infty} R_{n,m}  = f \;\; \mbox{in}\;\;
D_{m^*}(f).
$$
Finally, for each pole $z_j$ of $f$ in $D_{m^*}(f)$, and every $\varepsilon > 0$, there exists $n_0$ such that for all $n \geq n_0$ the polynomials $Q_{n,m}$ have at least $\tau_j$ zeros in the disk $\{z:|z - z_j| < \varepsilon\}$, where $\tau_j$ denotes the order of the pole $z_j$.
\end{thm}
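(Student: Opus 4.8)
The plan is a Montessus de Ballore argument adapted to the orthogonal expansion, powered by the Rakhmanov asymptotics \eqref{Rak1}--\eqref{Rak4} and by the content device $K\mapsto K(\varepsilon)$. Write $R:=R_{m^*}(f)$, let $z_1,\dots,z_p$ be the distinct poles of $f$ in $D_{m^*}(f)$ with orders $\tau_1,\dots,\tau_p$ (so $1<|z_i|<R$ and $N^*:=\tau_1+\dots+\tau_p\le m^*$), let $Q^*$ be the polynomial with exactly those zeros normalized as in \eqref{eq:norm}, and put $F:=Q^*f$; by the definition of $D_{m^*}(f)$, $F\in\mathcal H(\{z:|z|<R\})$. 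Set $\Phi_n:=Q_{n,m}f-P_{n,m}$ and $\Psi_n:=Q^*\Phi_n=Q_{n,m}F-Q^*P_{n,m}$. Since $f-R_{n,m}=\Phi_n/Q_{n,m}=\Psi_n/(Q^*Q_{n,m})$ and on $K(\varepsilon)$ one has $|Q^*Q_{n,m}|\ge c_0\,C_2\,n^{-2m}$, where $c_0:=\min_{K(\varepsilon)}|Q^*|>0$ because $K(\varepsilon)$ omits a neighbourhood of the poles of $f$ in $D_m(f)\supseteq D_{m^*}(f)$ and $C_2$ is the constant in \eqref{desig1}, the bound \eqref{fund4} follows once we show $\limsup_n\|\Psi_n\|_K^{1/n}\le\|z\|_K/R$. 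Note also that Jensen's formula applied to \eqref{eq:norm} gives $\tfrac1{2\pi}\int_0^{2\pi}\log|Q_{n,m}(e^{i\theta})|\,d\theta=0$, so $\|Q_{n,m}\|_{\mathbb T}\ge1$; together with $\deg Q_{n,m}\le m$ this shows $\{Q_{n,m}\}$ is a normal family, no partial limit is $\equiv0$, and the coefficients of $Q_{n,m}$ are bounded uniformly in $n$.

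First I would establish two facts. (i) Because $Q_{n,m}F\in\mathcal H(\{z:|z|<R\})$ and the coefficients of $Q_{n,m}$ are bounded, applying \eqref{coefF} on a circle $T_\rho$ with $1<\rho<R$ and \eqref{Rak4} yields, for every $\delta>0$, a constant $C_\delta$ independent of $n$ with $|\langle Q_{n,m}F,\varphi_k\rangle|\le C_\delta(R^{-1}+\delta)^k$ for all $k\ge0$. (ii) The function $Q_{n,m}f$ is meromorphic in $\{z:|z|<R\}$ with poles only at the $z_i$; deforming the contour in \eqref{coefF} past these poles and collecting residues gives, for every $k$,
\[ \langle Q_{n,m}f,\varphi_k\rangle=\gamma_k^{(n)}-\sum_{i=1}^p\sum_{l=1}^{\tau_i}b_{i,l}^{(n)}\,\frac{\psi_k^{(l-1)}(z_i)}{(l-1)!},\qquad |\gamma_k^{(n)}|\le C_\delta(R^{-1}+\delta)^k, \]
where $b_{i,l}^{(n)}$ is the Laurent coefficient of order $-l$ of $Q_{n,m}f$ at $z_i$, hence a fixed triangular (invertible) linear combination of $Q_{n,m}(z_i),Q_{n,m}'(z_i),\dots,Q_{n,m}^{(\tau_i-1)}(z_i)$ with leading term $\beta_{i,\tau_i}Q_{n,m}(z_i)$, $\beta_{i,\tau_i}\neq0$. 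Since $\deg P_{n,m}\le n-m^*$ and $\deg(Q^*P_{n,m})\le n$, condition c.2) forces $\langle Q_{n,m}f,\varphi_k\rangle=0$ for $n-m^*<k\le n$, while $\langle\Psi_n,\varphi_k\rangle=\langle Q_{n,m}F,\varphi_k\rangle$ and $\langle\Phi_n,\varphi_k\rangle=\langle Q_{n,m}f,\varphi_k\rangle$ for $k>n$.

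The core step is to exploit the $m^*$ relations $\langle Q_{n,m}f,\varphi_k\rangle=0$, $k=n-m^*+1,\dots,n$. Keeping $N^*$ of them produces a square linear system $M_n\,b^{(n)}=\gamma^{(n)}$ for $b^{(n)}=(b_{i,l}^{(n)})$, with $M_n$ built from $\psi_{n-r}^{(l-1)}(z_i)$, $r=0,\dots,N^*-1$. Differentiating \eqref{Rak2} (the errors being uniform over the bounded range of shifts $r$), $M_n$ factors asymptotically as a block diagonal matrix whose $i$-th block is a function of the derivatives of $\psi_n$ at $z_i$ times a confluent Vandermonde matrix in $z_1^{-1},\dots,z_p^{-1}$; the latter is invertible since the $z_i$ are distinct, so $M_n$ is invertible for large $n$. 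Because the scale of the $i$-th block of $M_n^{-1}$ is governed by $|\psi_n^{(\cdot)}(z_i)|^{-1}$ and $|\psi_n(z_i)|^{-1/n}\to|z_i|$ by \eqref{Rak4}, solving and using the bound on $\gamma_k^{(n)}$ gives the \emph{per pole} estimate $|b_{i,l}^{(n)}|\le C_\delta(|z_i|/R+\delta)^n\to0$. Feeding this together with the estimate on $\gamma_k^{(n)}$ into the residue formula, for $k>n$,
\[ |\langle\Phi_n,\varphi_k\rangle|\le C_\delta\Big((R^{-1}+\delta)^k+\sum_{i=1}^p(|z_i|/R+\delta)^n(|z_i|^{-1}+\delta)^k\Big); \]
since every $|z_i|>1$, squaring and summing over $k>n$ makes each term collapse to $C_\delta(R^{-1}(1+\delta))^{2n}$, so $\|\Phi_n\|_{L^2(\sigma)}\le C_\delta(R^{-1}+\delta)^n$, and hence $|\langle\Psi_n,\varphi_k\rangle|\le\|Q^*\|_{\mathbb T}\,\|\Phi_n\|_{L^2(\sigma)}\le C_\delta(R^{-1}+\delta)^n$ for all $k\le n$.

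To finish, since $\Psi_n\in\mathcal H(\{z:|z|<R\})$ it equals its Fourier series uniformly on compacta; for $K\subset D_{m^*}(f)$ choose $\rho_0$ with $\max(\|z\|_K,1)\le\rho_0<R$, so $\|\Psi_n\|_K\le\|\Psi_n\|_{T_{\rho_0}}\le\sum_k|\langle\Psi_n,\varphi_k\rangle|\,\|\varphi_k\|_{T_{\rho_0}}$; splitting the sum at $k=n$, using $|\langle\Psi_n,\varphi_k\rangle|\le C_\delta(R^{-1}+\delta)^n$ for $k\le n$ and $\le C_\delta(R^{-1}+\delta)^k$ for $k>n$, and $\|\varphi_k\|_{T_{\rho_0}}^{1/k}\to\rho_0$ from \eqref{Rak3}, gives $\|\Psi_n\|_K\le C_\delta(\rho_0R^{-1}(1+\delta))^n$; letting $\delta\to0$ and $\rho_0\downarrow\max(\|z\|_K,1)$ yields $\limsup_n\|\Psi_n\|_K^{1/n}\le\|z\|_K/R$ (with $\|z\|_K$ replaced by $1$ when $K\subset\overline{\mathbb D}$), hence \eqref{fund4}; the $h$-convergence is then the observation recorded just before the theorem. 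For the last assertion, $b_{i,l}^{(n)}\to0$ and the invertibility of the jet-to-residue maps force every partial limit $Q$ of $\{Q_{n,m}\}$ to vanish at each $z_i$ to order $\ge\tau_i$; as $Q\not\equiv0$, Hurwitz's theorem then gives at least $\tau_j$ zeros of $Q_{n,m}$ in any prescribed neighbourhood of $z_j$ for all large $n$. The main obstacle is the Hermite--Vandermonde step of the third paragraph: one must differentiate \eqref{Rak2} with error uniform for the bounded range of shifts, recognise the block-diagonal times confluent-Vandermonde structure of $M_n$, and --- crucially for the \emph{sharp} constant in \eqref{fund4} --- read off from $M_n^{-1}$ the per-pole rate $(|z_i|/R)^n$ rather than the cruder $(\max_i|z_i|/R)^n$; this rests on the block (not merely operator-norm) structure of $M_n^{-1}$, and on the uniform-in-$n$ control of $\gamma_k^{(n)}$ and $\langle Q_{n,m}F,\varphi_k\rangle$ furnished by the coefficient bound for $Q_{n,m}$, i.e. ultimately by the normalization \eqref{eq:norm} through \eqref{desig1}.
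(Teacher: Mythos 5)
Your proposal is correct and follows essentially the same route as the paper's proof (the device borrowed from Suetin): split the Fourier series of $w(Q_{n,m}f-P_{n,m})$ at $\nu=n$, bound the tail by a contour integral over $T_R$, and bound the head by solving the $\widetilde m\times\widetilde m$ linear system that the vanishing coefficients $a_{n,k}=0$, $k=n-m^*+1,\dots,n$, impose on the residue data at the poles, that system being asymptotically nonsingular thanks to \eqref{Rak2}. The only cosmetic differences are your choice of unknowns (Laurent coefficients of $Q_{n,m}f$, with nondegeneracy carried by a confluent Vandermonde together with the triangular jet-to-residue map, versus the paper's jets of $Q_{n,m}\psi_{n}$, with nondegeneracy carried by the limit determinant $\Delta\neq 0$), your use of Parseval/Cauchy--Schwarz instead of \eqref{F1} for the low-order coefficients, and Hurwitz's theorem in place of Gonchar's lemma for the pole attraction.
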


\begin{proof} Let $z_1,\ldots,z_N$ be the distinct poles of $f$ in $D_{m^*}(f)$
and $\tau_1,\ldots,\tau_N$ their orders, respectively. Consequently, $\sum_{k=1}^N \tau_k = \widetilde{m} \leq m^*$. Put
\[ w(z) = \prod_{k=1}^N (z - z_k)^{\tau_k}.
\]
Using c.2) we obtain
\begin{equation}\label{ipad}
(w_mQ_{n,m}f-w_mP_{n,m})(z)=\sum_{k \geq n+1}a_{n,k}w_m(z)\varphi_{k}(z) = \sum_{\nu \geq 0} b_{n,\nu} \varphi_\nu(z).
\end{equation}
Notice that $w_mQ_{n,m}f-w_mP_{n,m}\in H(D_{m^{*}}(f))$ and $\deg w_mP_{n,m} \leq n$.

We have two ways of calculating the Fourier coefficients $b_{n,\nu}$. On one hand, for each $\nu \geq 0$
\begin{equation} \label{F1}
b_{n,\nu}= \displaystyle \sum_{k =n+1}^{\infty} a_{n,k} \left < w_m \varphi_k,\varphi_{\nu}
\right > =  \sum_{k =n+1}^{\infty} \frac{a_{n,k}}{2\pi}\int w_m(z)\varphi_{k}(z)\overline{\varphi_{\nu}(z)}d \sigma(z).
\end{equation}
On the other hand
$$ b_{n,\nu}=\left\{ \begin{array}{ll}
\langle w_mQ_{n,m}f-w_mP_{n,m^{*}}, \varphi_{\nu}  \rangle,  & \nu=0,\cdots,n, \\
\langle  w_mQ_{n,m}f, \varphi_{\nu}  \rangle, &  \nu \geq n+1 .
\end{array}
\right.  $$
Since  $w_mQ_{n,m}f$ is analytic in $D_{m^{*}}(f)$ taking $1 < R<R_{m^{*}}(f)$, we obtain (see \eqref{coefF})
\begin{equation} \label{F2}
b_{n,\nu}= \frac{1}{2 \pi i}\int_{T_R} (w_mQ_{n,m}f-w_mP_{n,m })(z)  {\psi_{\nu}(z)} dz,  \qquad \nu=0,...,n,
\end{equation}
and, similarly,
\begin{equation} \label{F3}
b_{n,\nu}= \frac{1}{2 \pi i}\int_{T_R} (w_mQ_{n,m}f)(z)  {\psi_{\nu}(z)} dz.\qquad \nu \geq n+1.
\end{equation}

We will show that $\sum_{\nu \geq 0} b_{n,\nu}\varphi_\nu (z)$ converges uniformly to zero on each compact subset of $D_{m^*}(f)$ as $n\to \infty$ with geometric rate. To this end, due to the maximum principle, without loss of generality we assume that the compact sets contain the closed unit disk. Let us separate the series in two
\begin{equation} \label{resto} \sum_{\nu \geq 0} b_{n,\nu}\varphi_\nu(z) = \sum_{\nu = 0}^{n} b_{n,\nu}\varphi_\nu(z) + \sum_{\nu \geq n+1} b_{n,\nu}\varphi_\nu(z)
\end{equation}
Fix a compact subset $K, \overline{\mathbb{D}} \subset K  \subset D_{m^*}(f)$.

We begin with the infinite series to the right of \eqref{resto} which is easier to handle. Take $1 < R < R_{m^*}(f)$ such that $K$ and the poles of $f$ in $D_{m^*}(f)$ are  surrounded by $T_R$. According to \eqref{F3} and the first inequality in \eqref{desig1}
\[ |b_{n,\nu}| \leq C\|\psi_\nu\|_{T_R},
\]
for some constant $C$ independent of $n$. Choose $\delta > 0$ sufficiently small so that $\|z\|_{K} + \delta < R - \delta$. According to \eqref{Rak3}-\eqref{Rak4}, there exists $n_0$ such that
\[ \|\psi_\nu\|_{T_R} \leq \frac{1}{(R-\delta)^\nu}, \quad \|\varphi_\nu\|_{K} \leq (\|z\|_K + \delta)^\nu, \qquad \nu \geq n_0
\]
Set $q = \frac{\|z\|_K + \delta}{R-\delta} (< 1)$. If $n \geq n_0$, we obtain
\[\sum_{\nu \geq n+1} |b_{n,\nu}|\|\varphi_\nu(z)\|_K \leq C \sum_{\nu \geq n+1} q^\nu = C \frac{q^{n+1}}{1 - q}.
\]
Taking $\limsup_n$ of the $n$th root, then making $\delta$ tend to zero and $R$ to $R_{m^*}(f)$ it readily follows that
\begin{equation} \label{fund1} \limsup_n  \|\sum_{\nu \geq n+1} b_{n,\nu} \varphi_\nu \|_K^{1/n} \leq \frac{\|z\|_K}{R_{m^*}(f)}.
\end{equation}
Should $K$ be contained in $\overline{\mathbb{D}}$ one  substitutes $\|z\|_{K}$ by $1$ in the formula.

To deal with the first sum, we begin estimating the values $a_{n,k}$. The trick we are about to exhibit was borrowed from \cite{Sue}. Choose $r > 1$ such that $T_r \subset D_0(f)$ and $r < R < R_{m^*}(f)$ such that all the poles of $f$ in $D_{m^*}(f)$ and the compact set $K$ are surrounded by $T_R$. Set
$$a_{n,k}=\frac{1}{2 \pi i}\int_{T_r}(Q_{n,m}f\psi_k)(z)dz, \qquad k \geq n-m^*+1,  $$
$$\gamma_{n,k}=\frac{1}{2 \pi i}\int_{T_R}(Q_{n,m}f \psi_k)(z)dz, \qquad k \geq n-m^*+1. $$
Notice that $a_{n,k} = 0, k=n-m^*+1,\ldots,n,$ and for $k \geq n+1$ the $a_{n,k}$ are precisely the Fourier coefficients on the right hand side of c.2) (see \eqref{coefF}).

Since $f$ is meromorphic in  $D_{m^{*}}(f)$, using the residue theorem we have
\begin{equation}\label{ipad2}
\gamma_{n,k}-a_{n,k}=\displaystyle \sum_{j=1}^{N}\displaystyle \mbox{Res}(Q_{n,m}f\psi_k,z_j), \quad k\geq n-m^* +1,
 \end{equation}
where $\mbox{Res}(Q_{n,m}f\psi_k(z),z_j)$ is the residue of $Q_{n,m}f\psi_k$  at $z_j$. At $z_j$ the function $Q_{n,m}f\psi_k$ has a pole of order  $\leq \tau_j$; therefore,
 $$\mbox{Res}  (Q_{n,m}f\psi_k,z_j)=$$
 $$\frac{1}{(\tau_j-1)!}\lim_{z \to  z_j}\left[(Q_{n,m}\psi_{n})(z)\frac{(z-z_j)^{\tau_j}f(z)\psi_k(z)}{\psi_{n}(z)}   \right]^{(\tau_j-1)}.$$
Using the Leibnitz formula, it follows that
\[
\left[(Q_{n,m}\psi_{n})(z)\frac{(z-z_j)^{\tau_j}f(z)\psi_k(z)}{\psi_{n}(z)}   \right]^{(\tau_j-1)}=
\]
\begin{equation}\label{ipad4}
\sum_{\ell=0}^{\tau_j-1} {\tau_j-1 \choose \ell} \left[ (Q_{n,m}\psi_{n })(z) \right]^{(\tau_j -1-\ell)} \left[ \frac{(z-z_j)^{\tau_j}f(z)\psi_k(z)}{\psi_{n }(z)}\right]^{(\ell)}.
\end{equation}

Define
\begin{equation}\label{ipad5}
\alpha_{n}(j,\ell):=\frac{1}{(\tau_j-1)!}{\tau_j -1 \choose \ell}\lim_{z \to z_j}( Q_{n,m} \psi_{n })^{(\tau_j-1-\ell)}(z) .
\end{equation}
By \eqref{ipad4} and \eqref{ipad5}, we obtain
\begin{equation} \label{ipad3}
\mbox{Res}(Q_{n,m}f\psi_k,z_j)=\sum_{\ell=0}^{\tau_j-1}\alpha_{n}(j,\ell)\left[ \frac{(z-z_j)^{\tau_j}f(z)\psi_k(z)}{\psi_{n }(z)}\right]^{(\ell)}_{z=z_j}.
\end{equation}
Notice that $\alpha_n(j,\ell)$ does not depend on $k$. Then, for each $k \geq n-m^* +1$, \eqref{ipad2} and  \eqref{ipad3} give
\begin{equation} \label{as} a_{n,k}=\gamma_{n,k}- \displaystyle \sum_{j=1}^{N} \sum_{\ell=0}^{\tau_j-1}\alpha_{n}(j,\ell)\left[ \frac{(z-z_j)^{\tau_j}f(z)\psi_k(z)}{\psi_{n }(z)}\right]^{(\ell)}_{z=z_j}.
 \end{equation}
Since $a_{n,k}=0$, for $k=n-m^{*}+1,\ldots,n$ we can write
\begin{equation}\label{irod}
\gamma_{n,k}=   \displaystyle \sum_{j=1}^{N} \sum_{\ell=0}^{\tau_j-1}\alpha_{n}(j,\ell)\left[ \frac{(z-z_j)^{\tau_j}f(z)\psi_k(z)}{\psi_{n }(z)}\right]^{(\ell)}_{z=z_j}.
\end{equation}
Recall that $\sum_{j=1}^d \tau_j = \widetilde{m} \leq m^*$. Thus we have obtained a system of $\widetilde{m}$ equations on $\widetilde{m}$ unknowns (the quantities $\alpha_n(j,\ell)$).

The determinant $\Delta_n$ of the system has the form
\[ \Delta_n =  \left|
\begin{smallmatrix}
\left[\frac{(z-z_j)^{\tau_j}f(z)\psi_{n-\widetilde{m}+1}(z)}{\psi_{n}(z)}\right]_{z=z_j}  &  \cdots & \left[\frac{(z-z_j)^{\tau_j}f(z)\psi_{n-\widetilde{m}+1}(z)}{\psi_{n}(z)}\right]_{z=z_j}^{(\tau_j-1)}  \\
\left[\frac{(z-z_j)^{\tau_j}f(z)\psi_{n-\widetilde{m}+2}(z)}{\psi_{n}(z)}\right]_{z=z_j} &  \cdots & \left[\frac{(z-z_j)^{\tau_j}f(z)\psi_{n-\widetilde{m}+2}(z)}{\psi_{n}(z)}\right]_{z=z_j}^{(\tau_j-1)} \\
\vdots & \vdots & \vdots \\
\left[\frac{(z-z_j)^{\tau_j}f(z)\psi_{n}(z)}{\psi_{n}(z)}\right]_{z=z_j} &  \cdots & \left[\frac{(z-z_j)^{\tau_j}f(z)\psi_{n}(z)}{\psi_{n}(z)}\right]_{z=z_j}^{(\tau_j-1)}
\end{smallmatrix}
\right|_{j=1,\ldots,N} ,
\]
where the subindex on the determinant means that the indicated group of columns are successively written for $j=1,2,\ldots,N$. Due to \eqref{Rak3}
$$ \lim_{n \longrightarrow \infty} \Delta_{n}= {\Delta},$$
where
$$ {\Delta}=
\left|\begin{smallmatrix}
[(z-z_j)^{\tau_j}z^{\widetilde{m}-1}f(z)]_{z=z_j} &   \cdots & [(z-z_j)^{\tau_j}z^{\widetilde{m}-1}f(z)]_{z=z_j}^{(\tau_j-1)}(z_j)  \\
[(z-z_j)^{\tau_j}z^{\widetilde{m}-2}f(z)]_{z=z_j}&  \cdots & [(z-z_j)^{\tau_j}z^{\widetilde{m}-2}f(z)]_{z=z_j}^{(\tau_j-1)}(z_j)\\
\vdots & \vdots & \ddots & \vdots \\
 [(z-z_j)^{\tau_j}f(z)]_{z=z_j}  &  \cdots & [(z-z_j)^{\tau_j}f(z)]_{z=z_j} ^{(\tau_j-1)}.
\end{smallmatrix}\right|_{j=1,2,\cdots,N}.
 $$

Notice that $\Delta \neq 0$. In fact, should this determinant be equal to zero that would mean that there exists a linear combination of its rows giving the zero vector. In turn, this implies that there exists a polynomial of degree $\leq \widetilde{m} -1$ which multiplied times $f$ eliminates the $\widetilde{m}$ poles which $f$ has in $D_{m^*}(f)$ which is clearly impossible.  Therefore, $|\Delta_n| \geq C > 0$ for all sufficiently large $n$. In the sequel we only consider such $n$'s.

Let $\Delta_n(j,\ell)$ denote the determinant which is obtained substituting in the determinant of the system the column with index $q = \sum_{i=1}^{j-1} \tau_i + \ell +1$ with the column vector $(\gamma_{n,n-\widetilde{m}+1},\ldots,\gamma_{n,n})^{t}$ formed with the independent terms of equations \eqref{irod}. By Cramer's rule
\begin{equation}\label{ipad7}
\alpha_{n}(j,\ell)=\frac{\Delta_{n}(j,\ell)}{\Delta_n} = \frac{1}{\Delta_n} \sum_{s=1}^{\widetilde{m}} \gamma_{n,n-\widetilde{m} +s} M_n(s,q).
\end{equation}
where $M_n(s,q)$ is the cofactor corresponding to row $s$ and column $q$ of $\Delta_n(j,\ell)$. Making use of the fact that the $\alpha_n(j,\ell)$ do not depend on $k$ from \eqref{as} and \eqref{ipad7} it follows that
\begin{equation}\label{eqx}
a_{n,k}=\gamma_{n,k}-
\end{equation}
\[\frac{1}{\Delta_n}\sum_{j=1}^{N}  \sum_{\ell=0}^{\tau_j-1}\sum_{s=1}^{m^{*}}\gamma_{n,n-\widetilde{m}+s}M_n(s,q) \left(\frac{\psi_k}{\psi_{n }}  \right)^{(\ell)}(z_j)  , \quad  k\geq n-m^* +1.
\]

Choose $\varepsilon > 0$ so that $|z_j| - \varepsilon > r$ for all $j=1,\ldots,N$. (Recall that $r$ was chosen greater than $1$.) Using Cauchy's integral formula
\[ \left(\frac{\psi_{k}}{\psi_{n }}\right)^{(\ell)}(z_j) = \frac{\ell!}{2\pi i} \int_{|z-z_j| = \varepsilon}\frac{\psi_k(z)d z}{\psi_{n }(z)(z-z_j)^{\ell+1}}.
\]
On account of \eqref{Rak2}, it follows that there exists a constant $C_1$ such that
\[ \left| \left(\frac{\psi_{k}}{\psi_{n }}\right)^{(\ell)}(z_j)\right| \leq C_1\frac{1}{r^{k-n }}, \qquad k \geq n-m^* +1
\]
for all $ j=1\ldots,N, l=0,1,\cdots,\tau_j -1,$ and $n$ sufficiently large.
Consequently,
$$ \left | M_n(s,q) \right| \leq C_2$$
and using  \eqref{eqx} we obtain that there exists a constant $C_3$ such that
\begin{equation} \label{estas}
|a_{n,k}|\leq |\gamma_{n,k}|+\frac{C_3}{r^{k-n }}\sum_{s=1}^{m^{*}}|\gamma_{n-\widetilde{m}+s}|, \qquad k\geq n+1.
\end{equation}

From the integral which defines $\gamma_{n,k}$, the first inequality in \eqref{desig1}, and using \eqref{Rak3} we have that given $\delta > 0, R-\delta > r$, for all sufficiently large $n$
\[ |\gamma_{n,k}| \leq \frac{1}{(R - \delta)^k}, \qquad k \geq n - m^*+1
\]
and taking into consideration \eqref{estas}, we obtain
\begin{equation} \label{eq:as}
|a_{n,k}|\leq   \frac{C_4}{r^{k-n}(R - \delta)^n}, \quad k \geq n+1,
\end{equation}
for some constant $C_4$. Since  $\left|\langle w_m\varphi_{k},\varphi_{\nu}  \rangle \right| \leq \|w_m\|_{\mathbb{T}}$, due to \eqref{F1}  we can find a constant $C_5$ for which
\begin{equation} \label{estas1}|b_{n,\nu}|\leq \frac{C_5}{(R - \delta)^n}.
\end{equation}

Finally, let us estimate $\sum_{\nu = 0}^{n} b_{n,\nu}\varphi_\nu(z)$. Fix a compact subset $K \subset D_{m^*}(f)$. As we did for the other sum, we can assume without loss of generality that $K \supset \overline{\mathbb{D}}$. We also assume that $R$ and $\delta$ chosen previously satisfy $\|z\|_K + \delta < R - \delta$. From \eqref{Rak3} it follows that there exists some constant $C_6$ such that
\[ \|\varphi_\nu\|_K \leq C_6 (\|z\|_K + \delta)^\nu, \qquad \nu \geq 0.
\]
Therefore, making use of \eqref{estas1}, we obtain
\[ \|\sum_{\nu = 0}^{n} b_{n,\nu}\varphi_\nu(z)\|_K \leq  \frac{C_5C_6}{(R - \delta)^n}\sum_{\nu=0}^n (\|z\|_K + \delta)^\nu =
\]
\[\frac{C_5C_6}{(R - \delta)^n} \frac{(\|z\|_K + \delta)^{n+1} -1}{\|z\|_K + \delta -1}.
\]
Taking $\limsup_n$ of the $n$th root, then making $R$ tend to $R_{m^*}(f)$ and $\delta$ to zero, we arrive at
\begin{equation} \label{fund2}
\limsup_{n \to \infty}\|\sum_{\nu = 0}^{n} b_{n,\nu}\varphi_\nu(z)\|_K^{1/n} \leq \frac{\|z\|_K}{R_{m^*}(f)}.
\end{equation}
Again, if $K$ is contained in $\overline{\mathbb{D}}$ one must write $1$ on the right hand of this formula in place of $\|z\|_K$.
Formulas \eqref{ipad} and \eqref{resto} together with inequalities \eqref{fund1} and \eqref{fund2} easily render
\begin{equation} \label{fund3}
\limsup_{n \to \infty}\|w_m(Q_{n,m}f - P_{n,m})\|_K^{1/n} \leq \frac{\|z\|_K}{R_{m^*}(f)}.
\end{equation}

Fix $\varepsilon > 0$ and take any compact subset $K \subset D_{m^*}(f)$. For $z \in K(\varepsilon) = K \setminus J_{\varepsilon}$, according to the second inequality in \eqref{desig1}, we have that (notice that $J(\varepsilon)$ leaves out an $\varepsilon/6m$ neighborhood of the zeros on $w_m$)
\[ \|f - R_{n,m}\|_{K(\varepsilon)} \leq \frac{n^{2m}}{C_7}\|w_m(Q_{n,m}f - P_{n,m})\|_K
\]
for some constant $C_7$, and applying \eqref{fund3}, we obtain \eqref{fund4}. As mentioned in the introduction of the sets $J(\varepsilon)$, \eqref{fund4} (and much less) implies convergence in Hausdorff content in $D_{m^*}(f)$ as claimed. The statement concerning the asymptotic behavior of some of the zeros of $Q_{n,m}$ is a direct consequence of the convergence in Hausdorff content and a lemma of A.A. Gonchar, see \cite[Lemma 1]{gon}. With this we conclude the proof.
\end{proof}

We wish to point out that from \eqref{fund3} one can  derive that the $\tau_j$ poles of $Q_{n,m}$ closest to $z_j$ in fact converge to $z_j$ with geometric rate not greater than $|z_j|/R_{m^*}(f)$. We will return to this later.

We also mention that apart from the application of incomplete Fourier-Pad\'e approximation to simultaneous Fourier-Pad\'e approximation, which  will be seen in the next section, there are other possibilities. For example, we may have apriori knowledge of the location of some of the poles of $f$ and we can use this information to fix some of the zeros of $Q_{n,m}$ at such points. Another possibility is to combine two (or more) approximation criteria to define the rational functions; for example, interpolation at a fixed number of points and the $L_2$ construction (or viceversa as in \cite{S} and te references therein).


\section{Simultaneous approximation}\label{simultaneos}
As in the introduction, in this section we have $\mathbf{f}=(f_1,\dots,f_d) \in {\mathcal{H}}(\overline{\mathbb{D}})$
and $\mathbf{m}=(m_1,\dots,m_d)\in\mathbb{Z}^d_+\setminus\{\mathbf{0}\}$.
We will study the convergence of ${R}_{n,{\bf m}}$ to $\bf f$. Recall that for each $k=1,\ldots,d$ the rational function $R_{n,{\bf m},k}$ is an $(n,|{\bf m},m_k)$ incomplete Fourier-Pad\'e approximation to $f_k$.  Let
$\mathcal{P}_{n,{\bf m}}$ be the collection of zeros  of $Q_{n,\mathbf{m}}$. A direct consequence of Theorem \ref{contenido} is the following corollary.

\begin{cor}\label{contenidovec}
Let ${\bf f} \in {\mathcal{H}}(\overline{\mathbb{D}})$ and $\sigma' > 0$ a.e. on $\mathbb{T}$. Fix ${\bf m} \in {\mathbb{Z}}^d \setminus {\bf 0}$.  For each $n \geq |{\bf m}|$, let ${\bf R}_{n,{\bf m}}$ be a Fourier-Pad\'e
approximant of type $(n,{\bf m})$ for ${\bf f}$. Then, for each $i = 1,\ldots,d, K \subset D_{m_k}(f_k)$, and $\varepsilon > 0$
\begin{equation} \label{fund4*}
\limsup_{n \to \infty}\|f_i - R_{n,{\bf m},i}\|_{K(\varepsilon)}^{1/n} \leq \frac{\|z\|_K}{R_{m_i}(f_i)},
\end{equation}
where $\|z\|_K$ should be replaced by $1$ when $K \subset \overline{\mathbb{D}}$. In particular,
$$
h\mbox{-}\lim_{n\to\infty} R_{n,{\bf m},i}  = f_i \;\; \mbox{in}\;\;
D_{m_i}(f_i).
$$
Finally, for each $ i=1,\ldots,d,$ and pole $z_j$ of $f_i$ in $D_{m_i}(f_i)$, for any $\varepsilon > 0$, there exists $n_0$ such that for all $n \geq n_0$ the polynomials $Q_{n,{\bf m}}$ have at least $\tau_j$ zeros in  $\{z:|z - z_j| < \varepsilon\}$, where $\tau_j$ denotes the order of the pole $z_j$.
\end{cor}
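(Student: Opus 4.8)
The plan is to obtain Corollary \ref{contenidovec} as an immediate componentwise application of Theorem \ref{contenido}; the whole argument amounts to recognizing each coordinate of ${\bf R}_{n,{\bf m}}$ as an incomplete Fourier-Pad\'e approximant and then quoting that theorem.

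First I would fix $i\in\{1,\dots,d\}$ and check that $R_{n,{\bf m},i}=P_{n,{\bf m},i}/Q_{n,{\bf m}}$ is an incomplete Fourier-Pad\'e approximant of type $(n,|{\bf m}|,m_i)$ for $f_i$ in the sense of Definition \ref{defincompletosFP}. This is exactly the remark made right after that definition: condition a.1) gives $\deg P_{n,{\bf m},i}\le n-m_i$, $\deg Q_{n,{\bf m}}\le|{\bf m}|$ and $Q_{n,{\bf m}}\not\equiv 0$, which is c.1) with $m=|{\bf m}|$ and $m^{*}=m_i$; condition a.2) states that the Fourier series of $Q_{n,{\bf m}}f_i-P_{n,{\bf m},i}$ begins with the $\varphi_{n+1}$ term, which is c.2). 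The inequalities $n\ge|{\bf m}|\ge m_i$ needed in Definition \ref{defincompletosFP} hold by hypothesis, and the normalization \eqref{eq:norm} imposed on $Q_{n,{\bf m}}$ is precisely the one imposed on denominators of incomplete approximants.

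Next I would apply Theorem \ref{contenido} with $f=f_i$, $m=|{\bf m}|$ and $m^{*}=m_i$. Its conclusion \eqref{fund4} becomes \eqref{fund4*}, with the exceptional set $J_\varepsilon=J_\varepsilon^{(i)}$ formed, as in Section \ref{incompletos}, from the zeros of $Q_{n,{\bf m}}$ and the poles of $f_i$; the $h$-convergence assertion and the statement that $Q_{n,{\bf m}}$ has at least $\tau_j$ zeros in $\{z:|z-z_j|<\varepsilon\}$ for every pole $z_j$ of $f_i$ in $D_{m_i}(f_i)$ come out simultaneously. Letting $i$ range over $1,\dots,d$ finishes the proof; if one prefers a single exceptional set one takes $J_\varepsilon=\bigcup_{i=1}^{d}J_{\varepsilon}^{(i)}$ and invokes subadditivity of $h$ to keep its content small.

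I do not expect any genuine obstacle. The only points requiring a moment's attention are bookkeeping: that the one polynomial $Q_{n,{\bf m}}$ simultaneously plays the role of $Q_{n,m}$ for every index $i$ (so the set removed in \eqref{fund4*} may depend on $i$ through the poles of $f_i$, which is harmless since there are finitely many indices), and that the degree constraints in a.1) match those in c.1) under the substitution $(m,m^{*})\mapsto(|{\bf m}|,m_i)$. No estimate beyond those already carried out in the proof of Theorem \ref{contenido} is needed.
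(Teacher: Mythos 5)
Your proposal is correct and is exactly the paper's argument: the authors note right after Definition \ref{defincompletosFP} that each $R_{n,{\bf m},k}$ is an incomplete Fourier-Pad\'e approximant of type $(n,|{\bf m}|,m_k)$ for $f_k$, and then state the corollary as a direct consequence of Theorem \ref{contenido}. Your bookkeeping remarks about the exceptional sets and the degree substitution $(m,m^{*})\mapsto(|{\bf m}|,m_i)$ fill in the only details the paper leaves implicit.
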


Now let us prove Theorem \ref{corfund}. We will combine arguments used to prove Theorem \ref{contenido} and ideas from \cite{GS1}.

\vspace{0,2cm} \noindent
{\bf Proof of Theorem \ref{corfund}.}  First of all, notice that $\bf f$ must have exactly $|\bf m|$ poles in $D_{|{\bf m}|}({\bf f})$. If this was not the case, it is easy to show that $\bf f$ is not polewise independent with respect to $\bf m$ in $D_{|{\bf m}|}({\bf f})$. By $z_1,\ldots,z_N$ we denote the distinct poles of $\bf f$ in $D_{|\bf m|}({\bf f})$ and let $\tau_1,\ldots,\tau_N$ be their orders, respectively.

According to Definition \ref{defsimultaneosFP}, for each $i=1,\ldots,d$ we have
\[ (Q_{n,{\bf m}}f_i - P_{n,{\bf m},i})(z) = A_{n,n+1}^{(i)} \varphi_{n+1}(z) + \cdots,
\]
and $\deg P_{n,{\bf m},i} \leq n -m_i$. Therefore,
\[ A_{n,k} = \langle Q_{n,{\bf m}}f_i,\varphi_{k} \rangle, \qquad k \geq n - m_i +1,
\]
and $A_{n,k}^{(i)} =0, k = n-m_i+1,\ldots,n$. Our first goal will be to estimate the values $A_{n,k}^{(i)}$. The procedure is similar to the one employed to estimate the quantities $a_{n,k}$ in the proof of Theorem \ref{contenido}, so we will not go through all the details, but there are some important aspects to single out.

Take $r > 1$ such that $T_r \subset D_{0}({\bf f})$ and $R < R_{|{\bf m}|}({\bf f})$ such that $T_R$ surrounds all the poles $z_1,\ldots,z_N$. Obviously
$$A_{n,k}^{(i)}=\frac{1}{2 \pi i}\int_{T_r}(Q_{n,{\bf m}}f\psi_k)(z)dz, \qquad k \geq n-m_i+1.  $$
Define
$$\gamma_{n,k}^{(i)}=\frac{1}{2 \pi i}\int_{T_R}(Q_{n,{\bf m}}f \psi_{k})(z)dz, \qquad k \geq n-m_i+1. $$
Using the residue theorem if follows that
\begin{equation}\label{ipad2*}
\gamma_{n,k}^{(i)}-A_{n,k}^{(i)}=\displaystyle \sum_{j=1}^{N}\displaystyle \mbox{Res}(Q_{n,{\bf m}}f_i\psi_k,z_j), \quad k\geq n-m^* +1,
\end{equation}
and
$$\mbox{Res}  (Q_{n,m}f_i\psi_k,z_j)=$$
 $$\frac{1}{(\tau_j-1)!}\lim_{z \to  z_j}\left[(Q_{n,{\bf m}}\psi_{n})(z)\frac{(z-z_j)^{\tau_j}f_i(z)\psi_k(z)}{\psi_{n}(z)}   \right]^{(\tau_j-1)}.$$

Using the Leibnitz formula, it follows that
\[
\left[(Q_{n,{\bf m}}\psi_{n})(z)\frac{(z-z_j)^{\tau_j}f_i(z)\psi_k(z)}{\psi_{n}(z)}   \right]^{(\tau_j-1)}=
\]
\begin{equation}\label{ipad4*}
\sum_{l=0}^{\tau_j-1} {\tau_j-1 \choose \ell} \left[ (Q_{n,{\bf m}}\psi_{n })(z) \right]^{(\tau_j -1-\ell)} \left[ \frac{(z-z_j)^{\tau_j}f_i(z)\psi_k(z)}{\psi_{n }(z)}\right]^{(\ell)}.
\end{equation}
Define
\begin{equation}\label{ipad5*}
\alpha_{n}(j,\ell):=\frac{1}{(\tau_j-1)!}{\tau_j -1 \choose \ell}\lim_{z \to z_j}\left[ Q_{n,m} \psi_{n }(z) \right]^{(\tau_j-1-\ell)}.
\end{equation}
These quantities do not depend on $i$ or $k$. By \eqref{ipad4*} and \eqref{ipad5*}, we obtain
\begin{equation} \label{ipad3*}
\mbox{Res}(Q_{n,m}f_i\psi_k,z_j)=\sum_{\ell=0}^{\tau_j-1}\alpha_{n}(j,\ell)\left[ \frac{(z-z_j)^{\tau_j}f_i(z)\psi_k(z)}{\psi_{n }(z)}\right]^{(\ell)}_{z=z_j}.
\end{equation}
Then, for each $k \geq n-m^* +1$ and $i=1,\ldots,d$, \eqref{ipad2*} and  \eqref{ipad3*} give
\begin{equation} \label{as} A_{n,k}^{(i)}=\gamma_{n,k}^{(i)}- \displaystyle \sum_{j=1}^{N} \sum_{\ell=0}^{\tau_j-1}\alpha_{n}(j,\ell)\left[ \frac{(z-z_j)^{\tau_j}f_i(z)\psi_k(z)}{\psi_{n }(z)}\right]^{(\ell)}_{z=z_j}.
 \end{equation}

For $k=n-m_i+1,\ldots,n, i=1,\ldots,d$
\begin{equation}\label{irod*}
\gamma_{n,k}^{(i)}=   \displaystyle \sum_{j=1}^{N} \sum_{\ell=0}^{\tau_j-1}\alpha_{n}(j,\ell)\left[ \frac{(z-z_j)^{\tau_j}f_i(z)\psi_k(z)}{\psi_{n }(z)}\right]^{(\ell)}_{z=z_j},
\end{equation}
since $A_{n,k}^{(i)}=0$ for these values of $k$.
Thus, we have obtained a system of $|\bf m|$ equations on $|\bf m|$ unknowns (the quantities $\alpha_n(j,\ell)$).

The determinant $\Delta_n$ of the system has the form
\[ \left|
\begin{smallmatrix}
\left[\frac{(z-z_j)^{\tau_j}f_i(z)\psi_{n- {m_i}+1}(z)}{\psi_{n}(z)}\right]_{z=z_j}  &  \cdots & \left[\frac{(z-z_j)^{\tau_j}f_i(z)\psi_{n- {m_i}+1}(z)}{\psi_{n}(z)}\right]_{z=z_j}^{(\tau_j-1)}  \\
\left[\frac{(z-z_j)^{\tau_j}f_i(z)\psi_{n- {m_i}+2}(z)}{\psi_{n}(z)}\right]_{z=z_j} &  \cdots & \left[\frac{(z-z_j)^{\tau_j}f_i(z)\psi_{n- {m_i}+2}(z)}{\psi_{n}(z)}\right]_{z=z_j}^{(\tau_j-1)} \\
\vdots & \vdots & \vdots \\
\left[\frac{(z-z_j)^{\tau_j}f_i(z)\psi_{n}(z)}{\psi_{n}(z)}\right]_{z=z_j} &  \cdots & \left[\frac{(z-z_j)^{\tau_j}f_i(z)\psi_{n}(z)}{\psi_{n}(z)}\right]_{z=z_j}^{(\tau_j-1)}
\end{smallmatrix}
\right|_{j=1,\ldots,N,i=1,\ldots,d} ,
\]
where the subindex on the determinant means that the indicated group of columns are successively written for $j=1,2,\ldots,N$ and the rows repeated for $i=1,\ldots,d$. Due to \eqref{Rak3}
$$ \lim_{n \longrightarrow \infty} \Delta_{n}= {\Delta},$$
where
$$ {\Delta}=
\left|\begin{smallmatrix}
[(z-z_j)^{\tau_j}z^{ {m_i}-1}f_i(z)]_{z=z_j} &   \cdots & [(z-z_j)^{\tau_j}z^{ {m_i}-1}f_i(z)]_{z=z_j}^{(\tau_j-1)}(z_j)  \\
[(z-z_j)^{\tau_j}z^{ {m_i}-2}f_i(z)]_{z=z_j}&  \cdots & [(z-z_j)^{\tau_j}z^{ {m_i}-2}f_i(z)]_{z=z_j}^{(\tau_j-1)}(z_j)\\
\vdots & \vdots &  \vdots \\
 [(z-z_j)^{\tau_j}f_i(z)]_{z=z_j}  &  \cdots & [(z-z_j)^{\tau_j}f_i(z)]_{z=z_j} ^{(\tau_j-1)}.
\end{smallmatrix}\right|_{j=1,2,\cdots,N, i=1,\ldots,d}.
 $$

Let us show that  $\Delta \neq 0$. Assume the contrary. Then there exists a linear combination of rows giving the zero vector. This means that there exist polynomials $p_1,\ldots,p_d, \deg p_i \leq m_i -1,$ such that
\[ \sum_{i=1}^d[(z-z_j)^{\tau_j}p_i(z)f_i(z)]^{(\ell)}_{z=z_j} =0, \quad j=1,\ldots,d, \quad \ell=0,\ldots,\tau_j-1,
\]
but this contradicts the assumption that $\bf f$ is polewise independent with respect to $\bf m$ in  $D_{|{\bf m}|}({\bf f})$. Consequently, $|\Delta_n| \geq C > 0$ for all sufficiently large $n$ and we restrict our attention to such $n$'s.

Using Cramer's rule, the system of equations \eqref{irod*} allows us to express the $\alpha_n(j,l)$ in terms of the $\gamma_{n,k}^{(i)}, i=1,\ldots,d, k=n- {m_i}+1,\ldots,n$. Arguing as in the proof of Theorem \ref{contenido} we arrive at the bounds (compare with \eqref{eq:as})
\begin{equation} \label{eq:As}
|A_{n,k}^{(i)}|\leq   \frac{C}{r^{k-n}(R - \delta)^n}, \quad k \geq n+1, \quad i=1,\ldots,d.
\end{equation}

Fix $i \in \{1,2,\ldots,d\}$. We have
\[ Q_{n,{\bf m}}(z) Q_{|\bf m|}(z) f_i(z)  - Q_{|\bf m|}(z)  {P}_{n,{\bf n},i}(z) = \sum_{\ell \geq n+1}  A^{(i)}_{n,\ell} Q_{|\bf m|}(z) \varphi_{\ell}(z)   =
\]
\begin{equation} \label{e5*} \sum_{\nu = 0}^{n+|{\bf m}| -m_i } B_{n,\nu}^{(i)} \varphi_{\nu}(z) + \sum_{\nu \geq n+|{\bf m}| -m_i +1}  B_{n,\nu}^{(i)} \varphi_{\nu}(z),
\end{equation}
where $Q_{|\bf m|}(z)  {P}_{n,{\bf n},i}(z)$ has degree at most $n+|{\bf m}| -m_i $.

Fix a compact set $K \subset D_{|\bf m|}(\bf f)$. In the sequel we assume that $R$ was chosen so that $T_R$ also surrounds $K$. For the series in \eqref{e5*}, as in the proof of Theorem \ref{contenido}, it is easy to show that
\[ \limsup_n \|\sum_{\nu \geq n+|{\bf m}| -m_i +1}  B_{n,\nu}^{(i)} \varphi_{\nu}(z)\|_K^{1/n} \leq \frac{\|z\|_K}{R_{|\bf m|}(\bf f)}.
\]
Here and below
$\|z\|_K$ is replaced by $1$ when $K \subset \overline{\mathbb{D}}$. In order to prove that
\[ \limsup_n \|\sum_{\nu =0}^{n+|{\bf m}| -m_i }  B_{n,\nu}^{(i)} \varphi_{\nu}(z)\|_K^{1/n} \leq \frac{\|z\|_K}{R_{|\bf m|}(\bf f)}
\]
one employs \eqref{eq:As} and the equality
\[ B_{n,\nu}^{(i)} = \sum_{\ell \geq n+1}  A^{(i)}_{n,\ell} \langle  Q_{|\bf m|}(z) \varphi_{\ell}(z),\varphi_\nu \rangle,
\]
in a similar fashion as in Theorem \ref{contenido}. Consequently, for each $i=1,\ldots,d$
\begin{equation} \label{e4}
\limsup_{n\to \infty} \|Q_{n,{\bf m}}  Q_{|\bf m|}  f_i  - Q_{|\bf m|}   {P}_{n,{\bf n},i} \|_{K}^{1/n} \leq \frac{\|z\|_K}{R_{|{\bf m}|}({\bf f})} .
\end{equation}
and
\begin{equation} \label{e4*} \limsup_{n\to \infty} \| f_i   -  {R}_{n,{\bf n},i}  \|_{K(\varepsilon)}^{1/n} \leq \frac{\|z\|_K}{R_{|{\bf m}|}({\bf f})}.
\end{equation}

From here convergence in Hausdorff content readily follows and we obtain that each pole of $\bf f$ in $D_{|\bf m|}(\bf f)$ attracts as many zeros of $Q_{n,\bf m}$ as its order. Since $\deg Q_{n,\bf m} \leq |\bf m|$ and the total number of poles of $\bf f$ in $D_{|\bf m|}(\bf f)$ equals $|\bf m|$ we have that $\deg Q_{n,\bf m} = |\bf m|$ for all sufficiently large $n$. This implies that ${\bf R}_{n,\bf m}$ is unique for such $n$'s. In fact, if this was not the case we could find an infinite subsequence of indices for which Definition \ref{defsimultaneosFP} has solutions with $\deg Q_{n,\bf m} < |\bf m|$, which contradicts what was proved.   In the sequel, we only consider such $n$'s. As the poles of $\bf f$ have absolute value greater than $1$, we obtain that
\[  Q_{n,{\bf m}}(z) =  \prod_{j=1}^{|\bf m|}(1 - \frac{z}{z_{n,j}})
\]
and
\[ \lim_n Q_{n,{\bf m}}(z) = \prod_{k=1}^{N} (1 - \frac{z}{z_{k}})^{\tau_k} = Q_{|\bf m|}(z).
\]
Since ${\mathcal{P}}_{|{\bf m}|}({\bf f})$ is the set of accumulation points of the zeros of $Q_{n,\bf m}$, \eqref{inequality3} follows at once from \eqref{e4*}.

Let us prove \eqref{inequality4}. To this end we start by proving that for $k=1,\ldots,N$
\begin{equation} \label{veloc}
\limsup_{n\to \infty} |Q_{n,{\bf m}}^{(j)}(z_k)|^{1/n} \leq |z_k|/R_{|{\bf m}|}({\bf f}),\qquad j=0,\ldots,\tau_k-1.
\end{equation}
Suppose that the pole $z_k$ attains its order with the function $f_i$. Let $\varepsilon >0$ be sufficiently small so that the closed disk $C_{k,\varepsilon} = \{z:|z-z_k| \leq \varepsilon\}$ is contained in $D_{|\bf m|}(\bf f)$ and contains no other pole of ${\bf f}$. On account of \eqref{e4}
\[
\limsup_{n\to \infty} \|(z-z_k)^{\tau_k} f_i Q_{n,{\bf m}}   -  (z-z_k)^{\tau_k} {P}_{n,{\bf m},i} \|_{C_{k,\varepsilon}}^{1/n} \leq \frac{\|z\|_{C_{k,\varepsilon}}}{R_{|{\bf m}|}({\bf f})},
\]
and using Cauchy's integral formula for the derivative, we have
\begin{equation} \label{eq:der}
\limsup_{n\to \infty} \|[(z-z_k)^{\tau_k} f_i Q_{n,{\bf m}}   -  (z-z_k)^{\tau_k} {P}_{n,{\bf m},i}]^{(j)} \|_{C_{k,\varepsilon}}^{1/n} \leq \frac{\|z\|_{C_{k,\varepsilon}}}{R_{|{\bf m}|}({\bf f})},
\end{equation}
for all $j\geq 0$. In particular, taking $z = z_k$ and $j=0$, we obtain
\[
\limsup_{n\to \infty} |AQ_{n,{\bf m}} (z_k)|^{1/n} \leq \frac{|z_k|}{R_{|{\bf m}|}({\bf f})},
\]
where $A= \lim_{z\to z_k} (z-z_k)^{\tau_k} f_i(z) \neq 0$ since $z_k$ is a pole of $f_i$ of order $\tau_k$. Therefore
\[\limsup_{n\to \infty} | Q_{n,{\bf m}} (z_k)|^{1/n} \leq \frac{|z_k|}{R_{|{\bf m}|}({\bf f})}.\]

Proceeding by induction, take $s \leq \tau_k$ and assume that
\begin{equation} \label{e5}
\limsup_{n\to \infty} |(Q_{n,{\bf m}}^{(j)}(z_k)|^{1/n} \leq \frac{|z_k|}{R_{|{\bf m}|}({\bf f})}, \qquad j=0,\ldots,s-2,
\end{equation}
and let us show that \eqref{e5} holds for $j=s-1$. As $s-1 < \tau$, using \eqref{eq:der} we deduce that
\begin{equation} \label{las} \limsup_{n\to \infty}  |[(z-z_k)^{\tau_k} f_i Q_{n,{\bf m}}]^{(s-1)}(z_k) |  \leq \frac{|z_k|}{R_{|{\bf m}|}({\bf f})},
\end{equation}
Applying, the Leibnitz formula it follows that
\[ [(z-z_k)^{\tau_k}f_iQ_{n,{\bf m}}]^{(s-1)}(z_k) = \sum_{\ell =0}^{s-1} {s -1 \choose \ell} Q_{n,{\bf m}}^{(s-1-\ell)}(z_k) [(z-z_k)^{\tau_k}f_i]^{(\ell)}(z_k).
\]
Using \eqref{e5}, \eqref{las}, and that $A \neq 0$, we conclude that
\[
\limsup_{n\to \infty} |(Q_{n,{\bf m}}^{(s-1)}(z_k)|^{1/n} \leq \frac{|z_k|}{R_{|{\bf m}|}({\bf f})}.
\]

Consider a basis of polynomials $\{q_{k,s}: k=1,\ldots,N, s=0,\ldots,\tau_k -1\}$ such that $\deg q_{k,s} \leq |{\bf m}| -1$ for all $k,  s$ and
\[ q_{k,s}^{(j)}(z_i) = \delta_{i,k}\delta_{j,s}, \quad 1\leq i \leq N,\quad 0\leq j \leq \tau_i -1.
\]
Then
\[ Q_{n,\bf m}(z) = \sum_{k=1}^{N} \sum_{s=0}^{\tau_k -1} Q_{n,\bf m}^{(s)}(z_k) q_{k,s}(z) + C_{n} Q_{|\bf m|}(z)
\]
where $C_n = \prod_{k=1}^N {z_k^{\tau_k}}/\prod_{j=1}^{|\bf m|} {z_{n,j}}$. From \eqref{veloc} it readily follows that
\[ \limsup_n \|Q_{n,\bf m}  - C_{n} Q_{|\bf m|} \|^{1/n} \leq \frac{\max\{|\zeta|: \zeta \in
{\mathcal{P}}_{|{\bf m}|}({\bf f})\}}{R_{|{\bf m}|}({\bf f})}.
\]
Evaluating at zero, we obtain
\[ \limsup_n |1- C_n|^{1/n} \leq \frac{\max\{|\zeta|: \zeta \in
{\mathcal{P}}_{|{\bf m}|}({\bf f})\}}{R_{|{\bf m}|}({\bf f})}
\]
which combined with the previous estimate gives us \eqref{inequality4}. We are done.  \hfill $\Box$

Corollary \ref{contenido} complements Theorem \ref{corfund} because under the assumptions of the latter it may still occur that $R_{m_i}(f_i) > R_{|\bf m|}(\bf f)$ for some $i$ so that \eqref{fund4*} gives a better estimate than \eqref{inequality3} for that particular $i$. It is also possible to construct examples where $f$ is not polewise independent with respect to $\bf m$ and using Corollary \ref{contenido} one can derive uniform convergence on compact subsets of the region obtained deleting from  $D_{m_i}(f_i)$ the poles of $f_i$ (see, examples in \cite[Section 5]{CCG}).



\end{document}